\newcommand{\R}{{\mathbb{R}}}
\newcommand{\N}{{\mathbb{N}}}
\DeclareMathOperator{\ext}{ext}
\DeclareMathOperator{\Img}{Im}
\newcommand{\Cl}[1]{\overline{#1}}
\newtheorem{theorem}{Theorem}[section]
\newtheorem{proposition}[theorem]{Proposition}
\title{Two new examples of Banach spaces with a plastic unit ball}
\author{Rainis Haller}
\address{Institute of Mathematics and Statistics, University of Tartu, Narva
mnt 18, 51009 Tartu, Estonia}
\email{rainis.haller@ut.ee}
\author{Nikita Leo}
\address{Institute of Mathematics and Statistics, University of Tartu, Narva
mnt 18, 51009 Tartu, Estonia}
\email{nikita.leo@ut.ee}
\author{Olesia Zavarzina}
\address{Department of Mathematics and Informatics, V. N. Karazin Kharkiv National
University, 61022 Kharkiv, Ukraine.}
\email{olesia.zavarzina@yahoo.com}
\date{}
\begin{document}

\begin{abstract}
    We prove that Banach spaces $\ell_1\oplus_2\mathbb{R}$ and $X\oplus_\infty Y$, with strictly convex $X$ and $Y$, have plastic unit balls (we call a metric space plastic if every non-expansive bijection from this space onto itself is an isometry).
\end{abstract}
\maketitle
\section{Introduction}
A function between two metric spaces is called an \emph{isometry} if it preserves distances between points, and \emph{non-expansive} if it does not increase distances between points. We call a metric space $X$ \emph{plastic} if every non-expansive bijection from $X$ onto itself is an isometry. The last notion was introduced by S.~A.~Naimpally, Z.~Piotrowski, and E.~J.~Wingler in \cite{NPW2006}. It is known that every totally bounded metric space is plastic, see \cite[Satz IV]{FH1936} or \cite[Theorem 1.1]{NPW2006}. On the other hand, a plastic metric space need not be totally bounded nor bounded -- e.g., the set of integers with the usual metric is plastic \cite[Theorem 3.1]{NPW2006}. There are also examples of bounded metric spaces that are not plastic, one of our favorite examples here is a solid ellipsoid in Hilbert space $\ell_2(\mathbb Z)$ with infinitely many semi-axes equal to $1$ and infinitely many semi-axes equal to $2$, see \cite[Example 2.7]{CKOW2016}.

It is a challenging open problem, posed by B.~Cascales, V.~Kadets, J.~Orihuela, and E.~J.~Wingler in 2016 \cite{CKOW2016}, whether the unit ball of every Banach space is a plastic metric space. The unit ball of a finite-dimensional space is compact, and therefore plastic. So the question is really just about the infinite-dimensional spaces. So far, the plasticity of the unit ball has successfully been proved for the following spaces and classes of spaces:
\begin{itemize}
    \item strictly convex spaces,
    \item the space $\ell_1$,
    \item $\ell_1$-sums of strictly convex spaces,
    \item spaces whose unit sphere is the union of all its finite-dimensional polyhedral extreme subsets,
    \item the space $c$.
\end{itemize}
The first result was for strictly convex spaces and it was obtained by B.~Cascales, V.~Kadets, J.~Orihuela, and E.~J.~Wingler in 2016 \cite{CKOW2016}. The same year, V.~Kadets and O.~Zavarzina proved the plasticity of the unit ball of $\ell_1$ \cite{KZ2016}. They generalized this result to $\ell_1$-sums of strictly convex spaces in 2017 \cite{KZ2018}. The fourth item was obtained by C.~Angosto, V.~Kadets, and O.~Zavarzina in 2018 \cite[Theorem 4.11]{AKZ2018}. The plasticity of the unit ball of $c$ was proved by N.~Leo in 2021 \cite[Theorem 4.1]{Leo2021}.

In this paper, we present two new examples of Banach spaces whose unit balls are plastic: the sum of $\ell_1$ and $\R$ by $\ell_2$, and the sum of any two strictly convex spaces by $\ell_\infty$.
\section{Preliminaries}
We consider only real Banach paces. For a Banach space $X$, we denote the closed unit ball and the unit sphere of $X$ by $B_X$ and $S_X$.

Extreme points turn out to be essential to the study of plasticity of the unit ball. Recall that for a vector space $X$ and for a convex subset $C$ of $X$, a point $x\in C$ is called an \emph{extreme point} of $C$ if it does not belong to the interior of any non-trivial line segment connecting two distinct points of $C$. We use $\ext C$ to denote the set of extreme points of a set $C$. Henceforth, we focus on extreme points of the unit ball. Extreme points of the unit ball lie on the unit sphere. A space such that all the points of the unit sphere are extreme is called \emph{strictly convex}. Strictly convex spaces have a property that, for any two distinct points $x$ and $y$ and any non-negative scalars $\alpha$ and $\beta$ with $\alpha+\beta=\|x-y\|$, the point $\frac{\beta}{\|x-y\|}x+\frac{\alpha}{\|x-y\|}y$ is the only point of the space that is distance $\alpha$ from $x$ and distance $\beta$ from $y$. This is going to be used in the proof for $\ell_\infty$-sum of strictly convex spaces.

The next proposition describes the behaviour of a non-expansive bijection from the unit ball of a Banach space onto itself. It provides some of the main tools for the study of plasticity of the unit ball. The last item is the reason why are extreme points so important to the topic at hand. This proposition is going to be used extensively throughout both of the proofs.

\begin{proposition}[{\cite[Theorem 2.3]{CKOW2016}}]\label{proposition_BnEproperties}
    Let $X$ be a Banach space and let $F\colon B_X\to B_X$ be a non-expansive bijection. Then
        \begin{enumerate}
            \item $F(0)=0$,
            \item for each $x\in B_X$, $\|F(x)\|\leq\|x\|$,
            \item if $x\in S_X$, then $F^{-1}(x)\in S_X$,
            \item if $x\in\ext B_X$, then $F^{-1}(x)\in \ext B_X$ and $F^{-1}(\alpha x)=\alpha F^{-1}(x)$ for each $\alpha\in[-1,1]$.
        \end{enumerate}
\end{proposition}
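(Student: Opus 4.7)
My plan is to work through the four items in order, with item (4) absorbing most of the effort.

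For (1), I would use surjectivity of $F$. Setting $a = F(0)$, non-expansiveness gives $\|F(z) - a\| \leq \|z\| \leq 1$ for every $z \in B_X$, so $F(B_X)$ lies in the closed ball of radius $1$ around $a$; since $F$ is onto $B_X$, this forces $\|z - a\| \leq 1$ for every $z \in B_X$. If $a \neq 0$, testing with the unit vector $b = -a/\|a\| \in B_X$ gives $\|b - a\| = 1 + \|a\| > 1$, a contradiction, so $a = 0$. Items (2) and (3) then follow quickly: (2) is $\|F(x)\| = \|F(x) - F(0)\| \leq \|x\|$, and (3) says that for $x \in S_X$, setting $y = F^{-1}(x)$ gives $1 = \|F(y)\| \leq \|y\| \leq 1$, whence $\|y\| = 1$.

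For (4), fix $x \in \ext B_X$ and write $y = F^{-1}(x) \in S_X$. First I would show $F(ty) = tx$ for $t \in [0, 1]$. Non-expansiveness and (2) place $F(ty)$ inside the set $\{z \in t B_X : \|z - x\| \leq 1 - t\}$, and the crucial observation—reused below—is that extremality of $x$ collapses this set to $\{tx\}$: any $z$ in it yields $x = t(z/t) + (1-t)((x-z)/(1-t))$, a proper convex combination of two elements of $B_X$, so extremality forces $z/t = (x-z)/(1-t) = x$ and hence $z = tx$.

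To extend to negative $t$, I would prove $F(-y) = -x$. Setting $y' = F^{-1}(-x)$, non-expansiveness and $\|F(y) - F(y')\| = 2$ force $\|y - y'\| = 2$; then the midpoint $p = (y + y')/2$ has image $w = F(p)$ satisfying both $\|w - x\| \leq 1$ and $\|w + x\| \leq 1$. Writing $x = \tfrac{1}{2}((x-w) + (x+w))$ as a midpoint of two points in $B_X$ and invoking extremality of $x$ gives $w = 0 = F(0)$, whence $p = 0$ by injectivity, i.e., $y' = -y$. Running the previous step for the extreme point $-x$ then delivers $F(\alpha y) = \alpha x$ for all $\alpha \in [-1, 1]$.

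Finally, for extremality of $y$, I would argue by contradiction: assume $y = (u + v)/2$ with $u \neq v$ in $B_X$. The equality $\|y\| = 1$ together with $\|u\|, \|v\| \leq 1$ forces $\|u\| = \|v\| = 1$, so $u/2 - y = -v/2$ and $\|u/2 - y\| = 1/2$. By (2) and non-expansiveness, $F(u/2) \in \{z \in (1/2)B_X : \|z - x\| \leq 1/2\}$; the intersection lemma above (at $t = 1/2$) collapses this to $\{x/2\}$, so $F(u/2) = x/2 = F(y/2)$. Injectivity then gives $u = y$, whence $v = y$, contradicting $u \neq v$. The main conceptual hurdle is spotting that the very same intersection lemma that drives the linearity claim can be reapplied here to force extremality.
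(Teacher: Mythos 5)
Your proof is correct. The paper itself does not prove this proposition — it is quoted from \cite{CKOW2016} — but your argument is essentially the one given there: the same ball-inclusion trick for $F(0)=0$, and the same key observation that for an extreme point $x$ the set $\{z\in tB_X : \|z-x\|\le 1-t\}$ collapses to $\{tx\}$, reused both for the homogeneity claim and for the extremality of $F^{-1}(x)$.
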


It is also useful to know the following result.
\begin{theorem}[{\cite[Theorem 2]{Mankiewicz1972}}]\label{theorem_Mankiewicz}
    Let $X$ and $Y$ be normed spaces and let $U$ be a subset of $X$ and $V$ be a subset of $Y$. If $U$ and $V$ are convex with non-empty interior and there exists an isometric bijection $F\colon U\to V$, then $F$ extends to an affine isometric bijection $\widetilde{F}\colon X\to Y$.
\end{theorem}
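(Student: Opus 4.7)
The plan is to adapt the Mazur-Ulam theorem (the case $U=X$, $V=Y$) to the setting of convex subsets with nonempty interior: show that $F$ preserves midpoints, deduce that it is affine on $U$, and then extend to an affine map on all of $X$.

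First I would normalize: pre- and post-composing with translations (themselves affine isometries) lets us assume $0\in\operatorname{int}(U)$, $F(0)=0$, and hence $0\in\operatorname{int}(V)$. The heart of the argument is midpoint preservation: for $x,y\in U$ with $m=\tfrac{x+y}{2}\in U$, I would mimic Mazur-Ulam's symmetrization. Define $H_{0}=\{z\in U:\|z-x\|=\|z-y\|=\tfrac{1}{2}\|x-y\|\}$ and recursively $H_{n+1}=\{z\in H_{n}:\|z-w\|\leq\tfrac{1}{2}\operatorname{diam}H_{n}\text{ for all }w\in H_{n}\}$. The reflection $\sigma(z)=x+y-z$ is an isometry fixing $m$, and the classical argument identifies $m$ as the unique point of $\bigcap_{n}H_{n}$. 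Since the construction is purely metric, $F$ sends these sets to the analogous sets in $V$, forcing $F(m)=\tfrac{F(x)+F(y)}{2}$. Continuity of $F$ combined with midpoint preservation then forces $F$ to be affine on $U$, and since $U$ has nonempty interior the linear span of $U-U$ is all of $X$, so $F$ extends uniquely to an affine isometric bijection $\widetilde{F}\colon X\to Y$.

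The main obstacle is that the reflection $\sigma$ need not preserve $U$, so the Mazur-Ulam argument does not apply verbatim. To fix this, I would restrict attention to a symmetric neighborhood of $m$ inside $U$: since $\operatorname{int}(U)\neq\emptyset$ and $U$ is convex, one can rescale around $0$ or choose $x,y$ initially inside a ball $B(m,r)\subseteq U$, so that $\sigma$ preserves this ball. This gives local affineness of $F$, which then patches to global affineness on $U$ by continuity and the connectedness of $\operatorname{int}(U)$. Applying the same reasoning to $F^{-1}\colon V\to U$ then guarantees that the extension $\widetilde{F}$ is actually a bijection of $X$ onto $Y$.
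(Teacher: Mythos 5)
This statement is Mankiewicz's theorem, which the paper does not prove at all --- it is imported as a citation to \cite{Mankiewicz1972}. So the only thing to assess is whether your proposed argument would actually establish it, and there is a genuine gap in the central step.

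The gap is in the midpoint-preservation argument. The Mazur--Ulam symmetrization has to be run \emph{twice}: once in the domain, to show that the purely metric construction $H_0\supseteq H_1\supseteq\cdots$ singles out the linear midpoint $m=\tfrac{x+y}{2}$, and once more in the \emph{image}, to show that the corresponding construction there singles out the linear midpoint $\tfrac{F(x)+F(y)}{2}$. In the classical Mazur--Ulam setting the image-side argument is free, because the ambient set on the image side is all of $Y$, which is symmetric about every point, so the reflection $\tau(w)=F(x)+F(y)-w$ preserves each $H_n'$. In your localized version the image-side ambient set is $F\bigl(B(m,r)\bigr)$: an isometric copy of a ball, but not known to be a ball, not known to be convex, and above all not known to be symmetric about $\tfrac{F(x)+F(y)}{2}$ --- indeed, that symmetry is essentially equivalent to the conclusion $F(m)=\tfrac{F(x)+F(y)}{2}$ you are trying to prove, so invoking it would be circular. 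Your construction does show that $\bigcap_n H_n'=\{F(m)\}$ is a single point, but without a reflection preserving $F\bigl(B(m,r)\bigr)$ there is no way to identify that point with the linear midpoint of $F(x)$ and $F(y)$. Replacing $F\bigl(B(m,r)\bigr)$ by a ball of $V$ centered at $\tfrac{F(x)+F(y)}{2}$ does not help either, since $F$ is only an isometry onto its image and need not map onto such a ball. This is precisely why Mazur--Ulam does not localize to isometries of convex bodies in any routine way, and why Mankiewicz's theorem requires a substantially different argument; the remaining steps of your outline (dyadic density giving affineness from midpoint preservation, extension from a set with nonempty interior to the whole space, surjectivity via $F^{-1}$) are fine, but they all rest on the step that is not justified.
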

The theorem implies that if $X$ is a Banach space and $F\colon B_X\to B_X$ is an isometric bijection, then $F$ extends to an isometric automorphism of $X$ -- a linear isometric bijection from $X$ onto itself. If we want to prove that every non-expansive bijection on the unit ball of some space $X$ is an isometry, it might be useful to know what isometric bijections are there. The last result says that these are precisely the restrictions of the isometric automorphisms of $X$.

There is also a sufficient condition for a non-expansive bijection $F\colon B_X\to B_X$ to be an isometry.
\begin{theorem}[{\cite[Theorem 2.6]{CKOW2016}}]\label{theorem_sufficient}
    Let $X$ be a Banach space and let $F\colon B_X\to B_X$ be a non-expansive bijection. If $F(S_X)=S_X$ and $F(\alpha x)=\alpha F(x)$ for every $x\in S_X$ and every $\alpha\in[-1,1]$, then $F$ is an isometry.
\end{theorem}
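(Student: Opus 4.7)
The plan is to extract structural consequences of the hypotheses, extend $F$ positively homogeneously to all of $X$, and then reduce the conclusion to linearity of this extension.

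First I would record the immediate consequences. Since $F(\alpha x)=\alpha F(x)$ for $x\in S_X$ and $\alpha\in[-1,1]$, and since $F(S_X)=S_X$, every nonzero $p\in B_X$ satisfies $F(p)=\|p\|\,g(p/\|p\|)$, where $g:=F|_{S_X}$ is a bijection of $S_X$. Hence $F$ is norm-preserving, $\|F(p)\|=\|p\|$, and odd, $F(-p)=-F(p)$, and is entirely determined by $g$. A short check shows that $F^{-1}$ inherits the same structural properties (bijection of $B_X$, sends $S_X$ to $S_X$, radially homogeneous on rays), though its non-expansiveness is exactly what we must establish. Since $F$ is non-expansive, showing $F$ is an isometry is equivalent to $\|F(p)-F(q)\|\geq\|p-q\|$ for all $p,q\in B_X$. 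Writing $p=\alpha u$, $q=\beta v$ with $u,v\in S_X$ and $\alpha,\beta\in[0,1]$, applying non-expansiveness to the pairs $(\alpha u,\beta v)$ and $(\alpha u,-\beta v)$ (using oddness to evaluate $F(-\beta v)$) produces the twin inequalities
\[
\|\alpha g(u)-\beta g(v)\|\leq\|\alpha u-\beta v\|\quad\text{and}\quad \|\alpha g(u)+\beta g(v)\|\leq\|\alpha u+\beta v\|,
\]
while the individual norms are preserved: $\|\alpha g(u)\|=\alpha=\|\alpha u\|$ and $\|\beta g(v)\|=\beta=\|\beta v\|$.

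To organize the rest, I would pass to the positively homogeneous extension $\widetilde F\colon X\to X$ defined by $\widetilde F(x)=\|x\|\,g(x/\|x\|)$ (with $\widetilde F(0)=0$). A standard scaling argument — for $x,y\in X$ let $R=\max(\|x\|,\|y\|,1)$, apply the non-expansiveness of $F$ to $x/R,y/R\in B_X$, and use positive homogeneity of $\widetilde F$ — shows $\widetilde F$ is a non-expansive odd bijection of $X$ that is positively homogeneous and norm-preserving. The conclusion then reduces to showing $\widetilde F$ is linear: once additivity is in hand, norm preservation immediately gives $\|\widetilde F(x)-\widetilde F(y)\|=\|\widetilde F(x-y)\|=\|x-y\|$, so $F$ is an isometry.

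The main obstacle is establishing the additivity $\widetilde F(x+y)=\widetilde F(x)+\widetilde F(y)$ (equivalently, promoting the twin inequalities above to equalities). The plan is to exploit the non-expansive triangle inequalities $\|\widetilde F(x+y)-\widetilde F(x)\|\leq\|y\|$ and $\|\widetilde F(x+y)-\widetilde F(y)\|\leq\|x\|$ together with the norm preservation $\|\widetilde F(x+y)\|=\|x+y\|$ to pin $\widetilde F(x+y)$ at $\widetilde F(x)+\widetilde F(y)$; the oddness $\widetilde F(-y)=-\widetilde F(y)$ simultaneously forces $\|\widetilde F(x)+\widetilde F(y)\|\leq\|x+y\|$, which combined with the lower bound coming from $\widetilde F(x+y)$ leaves little room. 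Once $F$ is seen to restrict to an isometric bijection on some convex subset of $B_X$ with non-empty interior, Theorem~\ref{theorem_Mankiewicz} then extends it to an affine isometry of $X$ and closes the argument.
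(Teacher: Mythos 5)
Your preliminary reductions are all sound: ray-homogeneity together with $F(S_X)=S_X$ does give $F(p)=\|p\|\,g(p/\|p\|)$ with $g=F|_{S_X}$ a bijection of the sphere, hence norm preservation and oddness; the positively homogeneous extension $\widetilde F$ is indeed a non-expansive, norm-preserving, odd bijection of $X$ by your scaling argument; and the theorem does reduce to additivity of $\widetilde F$. (Note that the paper offers no proof to compare against --- the statement is quoted from \cite{CKOW2016} --- so your argument must stand on its own.) It does not: the additivity step, which is the entire content of the theorem, is asserted rather than proved. The constraints you propose to use to ``pin'' $w=\widetilde F(x+y)$ at $\widetilde F(x)+\widetilde F(y)$ --- namely $\|w-\widetilde F(x)\|\le\|y\|$, $\|w-\widetilde F(y)\|\le\|x\|$ and $\|w\|=\|x+y\|$ --- do not determine $w$ in a general Banach space. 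Concretely, in $\R^2$ with the maximum norm take $\widetilde F(x)=(1,0)$, $\widetilde F(y)=(0,1)$ with $\|x\|=\|y\|=\|x+y\|=1$: every $w=(a,b)$ with $a,b\in[0,1]$ and $\max(a,b)=1$ satisfies all three constraints, a whole arc of candidates rather than the single point $(1,1)$. The intersection of the two balls is a singleton only when $\|\widetilde F(x)-\widetilde F(y)\|=\|x\|+\|y\|$ and the space is strictly convex, neither of which is available; your twin inequalities would force equalities via the parallelogram law in Hilbert space, but nothing of the sort holds in general.

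The difficulty is genuine, not a formality: a bijection $g$ of $S_X$ whose homogeneous extension is non-expansive can strictly contract distances between pairs of points of $S_X$ itself (e.g.\ sliding the boundary of the unit square of $\ell_\infty^2$ along itself by a fixed arclength shortens the distance between suitable corners), and what rules such maps out is an expansion occurring between points lying on \emph{different} spheres $rS_X$; detecting it requires an argument that couples distinct radii, which is exactly the step your outline leaves blank. The closing appeal to Theorem~\ref{theorem_Mankiewicz} does not repair this: you never exhibit a convex set with nonempty interior on which $F$ is an isometric bijection onto a convex image (that is essentially what must be proved), and if additivity were already established then Mankiewicz would be superfluous, since additivity plus norm preservation gives the isometry directly.
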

The latter will be used to finish the proof for the case of $\ell_\infty$-sum of two strictly convex spaces.
\newpage
\section{The space $\ell_1\oplus_2\mathbb{R}$}

\begin{theorem}
The unit ball of $\ell_1\oplus_2\mathbb{R}$ is a plastic metric space.
\end{theorem}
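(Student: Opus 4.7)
The plan is to analyze $G := F^{-1}$ using Proposition \ref{proposition_BnEproperties}, which tells us $G$ is expansive, maps $\ext B$ into itself, and is linear on each extreme diameter $[-e, e]$. The geometry of $B := B_{\ell_1 \oplus_2 \R}$ is governed by the fact that $\ext B$ consists exactly of the points $(\alpha e_n, \beta)$ with $\alpha^2 + \beta^2 = 1$ and $n \in \N$. These form a family of circles $\partial D_n$, each bounding a 2-dimensional Euclidean disk $D_n := \{(\alpha e_n, \beta) : \alpha^2 + \beta^2 \le 1\}$ inside $B$ (because the norm of $\ell_1 \oplus_2 \R$ is Euclidean when restricted to $V_n := \operatorname{span}(e_n, (0,1))$). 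All disks $D_n$ share a common diameter, the axis $A := \{(0, t) : t \in [-1, 1]\}$, whose endpoints $(0, \pm 1)$ are the unique extreme points belonging to every $\partial D_n$.

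The main step, and principal obstacle, is to prove $G(0, 1) \in \{(0, 1), (0, -1)\}$. Assume for contradiction that $G(0, 1) = (\alpha_0 e_{n_0}, \beta_0)$ with $\alpha_0 > 0$. Since the equatorial extreme points $\{(e_k, 0)\}_{k \in \N}$ are pairwise at the maximum distance $2 = \operatorname{diam}(B)$, expansiveness forces their images to be at distance $2$ as well. A direct computation characterizes distance-$2$ pairs in $B$: they must have opposite $\R$-coordinates of equal modulus and $\ell_1$-aligned parts; applying this constraint across three distinct indices forces the $\R$-coordinates of $G(e_k, 0)$ to all vanish, so $G(e_k, 0) = (\pm e_{n_k}, 0)$ for an injection $k \mapsto n_k$. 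Expansiveness applied to the pairs $((e_k, 0), (0, \pm 1))$ then forces $n_k \ne n_0$ for every $k$. Finally, a triangle-inequality argument along the chord $\{(\lambda e_{n_0}, 1 - \lambda) : \lambda \in [0, 1]\}$ from $(0, 1)$ to $(e_{n_0}, 0)$---comparing the expansive lower bounds $\sqrt{2}\,\lambda$ and $\sqrt{2}(1 - \lambda)$ on the sub-chord distances with the exact image distance $\sqrt{2 + 2\alpha_0}$ at the two endpoints---produces a contradiction, forcing $\alpha_0 = 0$.

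After composing $F$ with the isometry $(x, t) \mapsto (x, -t)$ if necessary, we may assume $G(0, 1) = (0, 1)$, so $G$ is the identity on the axis $A$ by linearity on the extreme diameter. For an arbitrary extreme $(\cos\theta \, e_k, \sin\theta)$, expansive inequalities against the poles $(0, \pm 1)$ force its $\R$-coordinate to be preserved, and expansive inequalities against the equatorial pair $(\pm e_k, 0)$---whose images, by the previous step, lie on the equator---force the absolute value of the $\ell_1$-coordinate to be preserved and pin down the target axis. Hence $G$ restricts to an isometric bijection $D_k \to D_{\sigma(k)}$ for a permutation $\sigma$ of $\N$; by Theorem \ref{theorem_Mankiewicz}, each $G|_{D_k}$ extends to an affine isometry $V_k \to V_{\sigma(k)}$, and these extensions glue along the shared axis $A$ into a linear isometry $T$ of $\ell_1 \oplus_2 \R$ with $T = G$ on $\bigcup_n D_n$. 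The non-expansive bijection $T \circ F$ then fixes $\bigcup_n D_n$ pointwise; since the distances from a point $(x_1, s) \in B$ to the segments $\{(c e_n, 0) : c \in [-1, 1]\} \subset D_n$ determine $(x_1, s)$ uniquely---because each coefficient $a_n$ of $x_1$ appears as the location of the kink of the piecewise-linear function $c \mapsto \|(x_1, s) - (c e_n, 0)\|$, which is visible since $|a_n| \le 1$---we conclude $T \circ F = \mathrm{id}$, whence $F = T^{-1}$ is an isometry.
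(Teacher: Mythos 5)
Your overall architecture (locate the images of the poles and of the extreme circles under $G=F^{-1}$, then rigidify via Mankiewicz) parallels the paper's, but you try to bypass the paper's Steps 2--4, in which knowledge of $F$ is propagated from the segments $\{(cg_n,0)\}$ to the whole equatorial slice $L_0=\{z:h(z)=0\}$ (by induction, continuity, and a connectedness argument), yielding in particular that the index map $\sigma$ is \emph{onto}. That bypass breaks your ``main step.'' Writing $G(0,1)=(\alpha_0e_{n_0},\beta_0)$ with $\alpha_0>0$, you correctly get $n_k\ne n_0$ for all $k$, i.e.\ $n_0$ lies outside the range of $\sigma$; but then the images of the two endpoints of your chord are $(\alpha_0e_{n_0},\beta_0)$ and $(\pm e_{n_{n_0}},0)$ with $n_{n_0}\ne n_0$, which are at distance $\sqrt{2+2\alpha_0}\ge\sqrt2$, and the expansive lower bounds $\sqrt2\,\lambda$ and $\sqrt2\,(1-\lambda)$ on the sub-chord distances sum to $\sqrt2\le\sqrt{2+2\alpha_0}$ --- exactly what the triangle inequality permits. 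Lower bounds from expansiveness plus the triangle inequality (itself a lower bound on the sum) can never collide; a contradiction requires an \emph{upper} bound, i.e.\ non-expansiveness of $F$ applied at a point whose image under $F$ is already known. The paper gets this by first proving $\sigma$ surjective, so that $F^{-1}(0,1)=(ag_n,b)$ sits directly above the point $(ag_n,0)$ with known image $(ae_n,0)$, whence $\sqrt{1+a^2}\le|b|$ and $a=0$. Without surjectivity of $\sigma$ the point below $G(0,1)$ has unknown image and no such bound is available.

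The same gap reappears when you claim that expansive inequalities against the equatorial pair $(\pm e_k,0)$ ``pin down the target axis'' of $G(\cos\theta\,e_k,\sin\theta)=(ae_m,\sin\theta)$. If $m$ lies outside the range of $\sigma$, then $|a|=|\cos\theta|$ is automatic from extremality, and the distance from $(ae_m,\sin\theta)$ to every $G(ce_k,0)=(c\theta_ke_{n_k},0)$ equals $\sqrt{(|a|+|c|)^2+\sin^2\theta}$, which dominates the required lower bound $\sqrt{(\cos\theta-c)^2+\sin^2\theta}$ for all $c$ simply because $|\cos\theta|+|c|\ge|\cos\theta-c|$. So no contradiction arises and $m$ is not determined; consequently the family $\{G|_{D_k}\}$ need not assemble into a permutation of the disks, and the gluing into a surjective linear isometry $T$ (which itself needs $\sigma$ onto) is not justified. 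To repair the proof you essentially need the content of the paper's Steps 2--4: extend $F$'s known values to all finite $\ell_1$-combinations by the inductive argument of Kadets--Zavarzina, pass to $L_0$ by continuity, and prove $F^{-1}(L_0)=L_0$ by the connectedness argument; only then do the distance computations you describe close up.
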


\begin{proof}
Denote the space $\ell_1\oplus_2\mathbb{R}$ by $Z$. Let $e_k$ stand for the $k$-th element of the canonical basis of $\ell_1$. Denote by $h$ the projection of $Z$ onto $\R$. For each $b\in[-1,1]$ denote by $L_b$ the set $\{z\in B_Z\colon h(z)=b\}$. Note that
\[\ext B_Z=\{(ae_m, b)\colon a,b\in\R,\ a^2+b^2=1,\ m\in\N\}.\]

Let $(ae_i,b)$ and $(ce_j,d)$ be two arbitrary extreme points. If $i=j$, then the distance between these two points is $\sqrt{(a-c)^2+(b-d)^2}$ and we have
\begin{align*}
    \sqrt{(a-c)^2+(b-d)^2}=2 &\iff(a-c)^2+(b-d)^2=4\\
    &\iff a^2-2ac+c^2+b^2-2bd+d^2=4\\
    &\iff 0=a^2+2ac+c^2+b^2+2bd+d^2\\
    &\iff 0=(a+c)^2+(b+d)^2\\
    &\iff a=-c\ \&\ b=-d.
\end{align*}
If $i\ne j$, then the distance between $(ae_i,b)$ and $(ce_j,d)$ is $\sqrt{(|a|+|c|)^2+(b-d)^2}$ and we have
\begin{align*}
    \sqrt{(|a|+|c|)^2+(b-d)^2}=2 &\iff(|a|+|c|)^2+(b-d)^2=4\\
    &\iff a^2+2|a||c|+c^2+b^2-2bd+d^2=4\\
    &\iff 0=a^2-2|a||c|+c^2+b^2+2bd+d^2\\
    &\iff 0=(|a|-|c|)^2+(b+d)^2\\
    &\iff |a|=|c|\ \&\ b=-d.
\end{align*}
That is, the distance between points $(ae_i,b)$ and $(ce_j,d)$ is equal to two if and only if either $i=j$, $a=-c$, and $b=-d$, or $i\ne j$, $|a|=|c|$, and $b=-d$. In particular, if $u$ and $v$ are extreme points that are distance two apart, then $h(u)=-h(v)$. This is going to be used in Step 1.

{ \bf Step 1.} Let $F$ be a non-expansive bijection from $B_Z$ onto itself. We need to show that $F$ is an isometry. Let us first show that for each $m\in\N$ we have $F^{-1}(e_m,0)=(\theta e_n, 0)$, where $\theta\in\{-1,1\}$ and $n\in\N$.

Let $m\in\N$ be arbitrary. Choose indices $i$ and $j$ such that $m$, $i$, and $j$ are pairwise distinct. Note that $x=(e_m,0)$, $y=(e_i,0)$, and $z=(e_j,0)$ are extreme points. Item (4) of Proposition \ref{proposition_BnEproperties} implies that $x'=F^{-1}(x)$, $y'=F^{-1}(y)$, and $z'=F^{-1}(z)$ are also extreme points. Note that $x$, $y$, and $z$ are distance two apart. The non-expansiveness of $F$ implies that $x'$, $y'$, and $z'$ are also distance two apart. Since $x'$, $y'$, and $z'$ are extreme points that are distance two apart, we have $h(x')=-h(y')$, $h(y')=-h(z')$, and $h(z')=-h(x')$, from which it follows that $h(x')=0$. As $x'$ is an extreme point, it has the form $(ae_n,b)$, where $a,b\in\R$, $a^2+b^2=1$, and $n\in\N$. Since $h(x')=0$, we have $b=0$ and $a\in\{-1,1\}$, which finishes the proof. For further reference define $\sigma_m=n$, $\theta_m=a$, and $g_m=\theta_m e_{\sigma(m)}$.

This way we obtain a function $\sigma:\N\to\N$, which describes the permutation of indices exerted by $F$. Note that this function is an injection. Indeed, let $i$ and $j$ be two indices with $\sigma(i)=\sigma(j)$. Denote by $n$ the common value of $\sigma(i)$ and $\sigma(j)$. Then $F^{-1}(e_i,0)=(\theta_i e_n,0)$ and $F^{-1}(e_j,0)=(\theta_j e_n,0)$, where $\theta_i,\theta_j\in\{-1,1\}$. Suppose that $\theta_j=-\theta_i$. Then item (4) of Proposition \ref{proposition_BnEproperties} implies $F^{-1}(-e_i,0)=(\theta_j e_n,0)$, but the latter implies $(-e_i,0)=(e_j,0)$, which is impossible. Hence $\theta_i=\theta_j$, so we get $(e_i,0)=(e_j,0)$, from which it follows that $i=j$. Therefore, $\sigma$ is indeed an injection. This fact is going to be implicitly used in what follows. However, note that we do not yet know whether $\sigma$ is also a surjection.

Now we know that $F^{-1}(e_m,0)=(g_m,0)$ for every $m\in\N$. Further, item (4) of Proposition \ref{proposition_BnEproperties} implies $F^{-1}(ae_m,0)=(ag_m,0)$ for every $a\in[-1,1]$. The latter can be restated as that $F(ag_n,0)=(ae_n,0)$ for every $n\in\N$ and $a\in[-1,1]$.

{ \bf Step 2.} Using the same argument as in \cite{KZ2016}, we can show that for every $N \in \N$ and real numbers $a_1,\ldots,a_N$ with $\sum_{n=1}^N|a_n| \leq 1$ we have
\[F\left(\sum_{n=1}^Na_n g_n,0\right) = \left(\sum_{n=1}^Na_n e_n,0\right),\]
where $g_n$ is defined as in Step 1. The proof is by induction on $N$. We are going to omit the proof, since it repeats the argument from \cite{KZ2016} almost word to word. The proof depends on the fact that $F(ag_n,0)=(ae_n,0)$ for every $n\in\N$ and $a\in[-1,1]$ as was established in Step 1.

{ \bf Step 3.} The previous step and continuity of $F$ imply that for every real sequence $(a_n)_{n=1}^{\infty}$ with $\sum_{n=1}^{\infty}|a_n|\leq 1$ we have
\[F\left(\sum_{n=1}^\infty a_n g_n,0\right) = \left(\sum_{n =1}^\infty a_n e_n,0\right).\]
If we denote the set $\{z\in B_Z\colon h(z)=0\}$ by $L_0$, the latter implies $F^{-1}(L_0)\subset L_0$.

{ \bf Step 4.} We have seen that $F^{-1}(L_0)\subset L_0$. Now, let us show that $F^{-1}(L_0)=L_0$. We are going to use a proof by contradiction. Suppose that $F^{-1}(L_0)\ne L_0$. 
Then there is $z\in L_0 \setminus F^{-1}(L_0)$. Let $s$ stand for $F^{-1}(0,1)$. Item (4) of Proposition \ref{proposition_BnEproperties} implies that $F^{-1}(0,-1)=-F^{-1}(0,1)$. Hence $F(s)=(0,1)$ and $F(-s)=(0,-1)$. Let us consider two cases.

{ \it Case 1.} Suppose that $s\not\in L_0$. Consider a continuous curve $\xi:[0,1]\to B_Z$ composed of line segments $[s,z]$ and $[z,-s]$. Note that $\Img\xi\cap L_0=\{z\}$, but $z\not\in F^{-1}(L_0)$. Hence $\Img\xi$ does not intersect $F^{-1}(L_0)$. Since $F$ is continuous, $\xi'=F\circ\xi$ is also a continuous curve in $B_Z$. We know that $\Img\xi$ does not intersect $F^{-1}(L_0)$, so $\Img\xi'$ should not intersect $L_0$. Since $h$ is a continuous functional, $f=h\circ\xi'$ is a continuous function from $[0,1]$ into $\R$. Note that $f(0)=1$ and $f(1)=-1$. Therefore, there exists $t\in(0,1)$ such that $f(t)=0$, which implies that $\Img\xi'\cap L_0\ne\emptyset$, which is a contradiction.

{ \it Case 2.} For the second case, suppose that $s\in L_0$. The argument is the same except that now we consider a curve composed of line segments $[s,(0,1)]$ and $[(0,1),-s]$. Again, we see that $\Img\xi$ does not intersect $F^{-1}(L_0)$, thus $\Img\xi'$ should not intersect $L_0$. However, we have $f(0)=1$ and $f(1)=-1$, which implies that there exists $t\in(0,1)$ with $f(t)=0$. The latter means that $\Img\xi'\cap L_0\ne\emptyset$, which is again a contradiction.

Therefore, we have $F^{-1}(L_0)=L_0$. Note that this implies that the function $\sigma$ defined earlier is a bijection from $\N$ onto $\N$. This allows us to give the set of extreme points an alternative description:
\[\ext B_Z=\{(ag_n, b)\colon a,b\in\R,\ a^2+b^2=1,\ n\in\N\}.\]

{ \bf Step 5.} Let us show that $F(0,1)=(0,1)$ or $F(0,1)=(0,-1)$. First, item (4) of Proposition \ref{proposition_BnEproperties} implies that $F^{-1}(0,1)\in \ext B_Z$. Hence we have $F^{-1}(0,1)=(ag_n,b)$, where $a,b\in\R$, $a^2+b^2=1$, and $n\in\N$. Consider points $(ag_n,b)$ and $(ag_n,0)$. These points are distance $|b|$ from each other. We have $F(ag_n,b)=(0,1)$ and by Step 1 we have $F(ag_n,0)=(ae_n,0)$. The distance between these points is $\sqrt{1+a^2}$. Since $F$ is non-expansive, we must have $\sqrt{1+a^2}\leq |b|$, which implies that $1+a^2\leq b^2$. Keeping in mind the relation $a^2+b^2=1$, we can infer that $a=0$ and $b\in\{-1,1\}$. Therefore, either $F(0,1)=(0,1)$ or $F(0,1)=(0,-1)$.

For the case $F(0,1)=(0,1)$, item (4) of Proposition \ref{proposition_BnEproperties} implies that $F(0,b)=(0,b)$ for all $b\in [-1,1]$. In particular, we have $F(0,-1)=F(0,-1)$. For the case $F(0,1)=(0,-1)$, item (4) of Proposition \ref{proposition_BnEproperties} implies that $F(0,b)=(0,-b)$ for all $b\in [-1,1]$. In particular, we have $F(0,-1)=F(0,1)$. Further we deal with the case $F(0,1)=(0,1)$. The case $F(0,1)=(0,-1)$ can be handled in an analogous way.

{ \bf Step 6.} Note that an operator $\mathcal{L}\colon\ell_1\to\ell_1$ defined by the formula \[\mathcal{L}\left(\sum_{n=1}^\infty a_ng_n\right)=\sum_{n=1}^\infty a_ne_n\]
is an isometric automorphism of $\ell_1$. Hence the operator $\mathcal{L}'\colon Z\to Z$ defined by $\mathcal{L}'(x,y)=(\mathcal{L}(x),y)$ is an isometric automorphism of $Z$. Our goal is to show that $F$ is the restriction of the latter operator to $B_Z$. Obviously, this will also prove that $F$ is an isometry. So we need to show that $F(z)=\mathcal{L}'(z)$ for every $z\in B_Z$. Note that the latter has already been established for $h(z)=0$ and $h(z)\in\{-1,1\}$. Hence, we only need to consider the case $|h(z)|\in(0,1)$. Let $b$ be an arbitrary real number with $|b|\in(0,1)$, let $a$ stand for $\sqrt{1-b^2}$, and let us show that $F(z)=\mathcal{L}'(z)$ for $h(z)=b$. Note that $z$ has the form $(ax,b)$, where $x\in B_{\ell_1}$. Also note that $\mathcal{L}'(z)=(\mathcal{L}(ax),b)$. Therefore, we need to show that $F(ax,b)=(\mathcal{L}(ax),b)$ for every $x\in B_{\ell_1}$.

We begin with showing that $F(ax,b)=(\mathcal{L}(ax),b)$ holds for every $x\in S_{\ell_1}$. This is the same as to show that $F^{-1}(ay,b)=(\mathcal{L}^{-1}(ay),b)$ for every $y\in S_{\ell_1}$. Hence, fix an arbitrary $y\in S_{\ell_1}$. Since $F^{-1}(ay,b)\in B_Z$, it has the form $(cx,d)$, where $c,d\in\R$, $c^2+d^2=1$, and $x\in B_{\ell_1}$. Further, since $(ay,b)\in S_Z$, by item (3) of Proposition \ref{proposition_BnEproperties} we have $(cx,d)\in S_Z$. Therefore, $x\in S_{\ell_1}$.

Consider points $(cx,d)$ and $(0,1)$. The distance between these points is equal to $\sqrt{c^2+(1-d)^2}=\sqrt{2-2d}$. We have $F(cx,d)=(ay,b)$ and by Step 5 we have $F(0,1)=(0,1)$. The distance between these points is $\sqrt{a^2+(1-b)^2}=\sqrt{2-2b}$. Since $F$ is non-expansive, we must have $\sqrt{2-2b}\leq\sqrt{2-2d}$, which implies that $d\leq b$. Now, consider points $(cx,d)$ and $(0,-1)$. The distance between these points is equal to $\sqrt{c^2+(1+d)^2}=\sqrt{2+2d}$. We have $F(cx,d)=(ay,b)$ and by Step 5 we have $F(0,-1)=(0,-1)$. The distance between these points is $\sqrt{a^2+(1+b)^2}=\sqrt{2+2b}$. Since $F$ is non-expansive, we must have $\sqrt{2+2b}\leq\sqrt{2+2d}$, which implies that $b\leq d$. Therefore, we have $b=d$.

Consider now points $(cx,b)$ and $(cx,0)$. The distance between these points is equal to $|b|$. We have $F(cx,b)=(ay,b)$ and by Step 2 we have $F(cx,0)=(\mathcal{L}(cx),0)$. The distance between these points is equal to $\sqrt{\|ay-\mathcal{L}(cx)\|^2+b^2}$. Since $F$ is non-expansive, we must have $\sqrt{\|ay-\mathcal{L}(cx)\|^2+b^2}\leq|b|$, but this implies $\mathcal{L}(cx)=ay$. Therefore, we have $F^{-1}(ay,b)=(\mathcal{L}^{-1}(ay),b)$, which finishes the argument.

{ \bf Step 7.} The relation $F(ax,b)=(\mathcal{L}(ax),b)$ is now established for $\|x\|=1$. Note that the latter relation is obvious for $x=0$, being a corollary of Step 5. Therefore, let us consider the case where $\|x\|\in(0,1)$.

To begin with, let us consider points $(a\frac{x}{\|x\|},b)$, $(ax,b)$, and $(0,b)$. The distance between the first two is $a(1-\|x\|)$ and the distance between the second two is $a\|x\|$. Note that $F(0,b)=(0,b)$ by Step 5 and $F(a\frac{x}{\|x\|},b)=(\mathcal{L}(a\frac{x}{\|x\|}),b)$ by Step 6. Since $F$ is non-expansive, we have \[\left\|\left(\mathcal{L}\left(a\tfrac{x}{\|x\|}\right),b\right)-F(ax,b)\right\|\leq a(1-\|x\|)\]
and
\[\left\|F(ax,b)-(0,b)\right\|\leq a\|x\|.\]
However, as the distance between $(\mathcal{L}(a\frac{x}{\|x\|}),b)$ and $(0,b)$ is equal to $a$, the last two inequalities must be in fact equalities, since otherwise we have a contradiction with the triangle inequality. Let $(\sum_{n=1}^{\infty}y_ne_n,d)$ be the expansion of $F(ax,b)$ and let $(\sum_{n=1}^{\infty}\tilde{y}_ne_n,b)$ be the expansion of $(\mathcal{L}(a\frac{x}{\|x\|}),b)$. Now we have
\begin{align*}
    a=
    \left\|\mathcal{L}\left(a\tfrac{x}{\|x\|}\right)\right\|=
    \sum_{n=1}^{\infty}|\tilde{y}_n|\leq
    \sum_{n=1}^{\infty}|\tilde{y}_n-y_n|+\sum_{n=1}^{\infty}|y_n|\leq\\
    \sqrt{\left(\sum_{n=1}^{\infty}|\tilde{y}_n-y_n|\right)^2+(b-d)^2}+\sqrt{\left(\sum_{n=1}^{\infty}|y_n|\right)^2+(b-d)^2}=\\
    \left\|\left(\mathcal{L}\left(a\tfrac{x}{\|x\|}\right),b\right)-F(ax,b)\right\|+\left\|F(ax,b)-(0,b)\right\|=\\
    a(1-\|x\|)+a\|x\|=a.
\end{align*}
Therefore, all the inequalities in between are in fact equalities, which is only possible when $b=d$.

Finally, consider points $(ax,b)$ and $(ax,0)$. The distance  between these two points is $|b|$. By Step 2 we have $F(ax,0)=(\mathcal{L}(ax),0)$. Since $F$ is non-expansive, it follows that $\sqrt{\|\sum_{n=1}^{\infty}y_ne_n-\mathcal{L}(ax)\|^2+b^2}\leq|b|$, hence $\sum_{n=1}^{\infty}y_ne_n=\mathcal{L}(ax)$. Therefore, $F(ax,b)=(\mathcal{L}(ax),b)$ as needed.

We have shown that $F$ is a restriction of an isometric automorphism of $Z$. Therefore, $F$ is an isometry as required.
\end{proof}
\newpage
\section{The space $X\oplus_\infty Y$}

In this section, we are going to prove the plasticity of the unit ball of the \mbox{$\ell_\infty$-sum} of two strictly convex Banach spaces. 

\begin{theorem}
Let $X$ and $Y$ be two strictly convex Banach spaces. Then the unit ball of $X\oplus_\infty Y$ is a plastic metric space.
\end{theorem}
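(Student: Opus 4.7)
The plan is to verify the hypotheses of Theorem~\ref{theorem_sufficient} applied to $F$. The condition $F(S_Z)=S_Z$ follows at once: item~(2) of Proposition~\ref{proposition_BnEproperties} forces $F$ to map the open ball into itself (so $F(S_Z)\subseteq S_Z$), while item~(3) gives $F^{-1}(S_Z)\subseteq S_Z$, whence $S_Z\subseteq F(S_Z)$. The heart of the proof is showing $F(\alpha z)=\alpha F(z)$ for all $z\in S_Z$ and $\alpha\in[-1,1]$.

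By strict convexity of $X$ and $Y$ one has $\ext B_Z=S_X\times S_Y$, and two extreme points $(x_1,y_1),(x_2,y_2)$ are at distance $2$ if and only if $x_2=-x_1$ or $y_2=-y_1$. Fix $(x_0,y_0)\in\ext B_Z$ and set $(x'_0,y'_0)=F^{-1}(x_0,y_0)$. Since $F$ is non-expansive and $2$ is the diameter of $B_Z$, every distance-$2$ pair of extreme points remains at distance $2$ under $F^{-1}$. Combining this with item~(4) of Proposition~\ref{proposition_BnEproperties} and injectivity of $F^{-1}$, a case analysis on the $4$-tuple $\{\pm x_0\}\times\{\pm y_0\}$ leaves only two configurations: Case~I, coordinate-wise $F^{-1}(\pm x_0,\pm y_0)=(\pm x'_0,\pm y'_0)$; or Case~II, the sign-swap $F^{-1}(-x_0,y_0)=(x'_0,-y'_0)$ and $F^{-1}(x_0,-y_0)=(-x'_0,y'_0)$. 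Further distance-$2$ constraints between $(x_0,y_1)$ and $(-x_0,\pm y_0)$ then force the case to be the same at every extreme point, and yield the following structure on $\ext B_Z$: in Case~I, $F^{-1}(x,y)=(A(x),B(y))$ for injective antipode-preserving maps $A\colon S_X\to S_X$ and $B\colon S_Y\to S_Y$; Case~II gives $F^{-1}(x,y)=(A(y),B(x))$ and additionally forces $X\simeq Y$.

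The last step is to upgrade this product structure and extend it to all of $B_Z$. A connectedness argument on $S_X\times S_Y$, analogous to Step~4 of the $\ell_1\oplus_2\R$ proof, shows $A$ and $B$ are surjective; they then extend by ray-linearity to bijections $\hat A\colon B_X\to B_X$ and $\hat B\colon B_Y\to B_Y$. For a non-extreme sphere point $(x_0,y)\in S_X\times B_Y$, the strict-convexity property recorded in the preliminaries (unique point at prescribed distances from two given points), applied to the $Y$-coordinate together with the known values of $F^{-1}(x_0,\pm y_0)$ and the distance-$2$ constraint from $(-x_0,y_0)$, pins down $F^{-1}(x_0,y)=(\hat A(x_0),\hat B(y))$. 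A symmetric argument handles $B_X\times S_Y$, and repeating the method covers the interior of $B_Z$, giving $F^{-1}=(\hat A,\hat B)$ globally. Hence $F=(\hat A^{-1},\hat B^{-1})$ is non-expansive, so each of $\hat A^{-1},\hat B^{-1}$ is a non-expansive bijection of the respective strictly convex unit ball; by the plasticity of unit balls of strictly convex spaces (the first item of the list in the introduction) they are isometries, and Theorem~\ref{theorem_Mankiewicz} extends them to linear isometries of $X$ and $Y$. Thus $F^{-1}$ coincides with an isometric linear automorphism of $Z$, and Theorem~\ref{theorem_sufficient} concludes. The principal obstacle I expect is establishing the global consistency of the Case~I/II dichotomy, where one must carefully combine distance-$2$ constraints across arbitrary pairs of extreme points to rule out mixed behaviour.
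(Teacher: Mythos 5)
Your overall strategy --- derive a coordinate-wise product structure for $F^{-1}$ on the extreme points from distance-$2$ combinatorics, extend it to the whole ball via strict convexity, and conclude --- is close in spirit to the paper's proof, and the part you flag as the principal obstacle is actually fine: the Case~I/II dichotomy on $\{\pm x_0\}\times\{\pm y_0\}$ and its propagation across columns and rows does follow from distance-$2$ constraints (distances equal to the diameter are preserved by $F^{-1}$), and is a viable, arguably more elementary, substitute for part of the paper's argument. The genuine gaps are elsewhere. First, your opening claim that $F(S_Z)=S_Z$ ``follows at once'' is wrong: item~(2) of Proposition~\ref{proposition_BnEproperties} shows the open ball maps into the open ball, and by injectivity this yields $F(S_Z)\supseteq S_Z$, not $F(S_Z)\subseteq S_Z$ --- a sphere point could a priori be sent into the interior. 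If this inclusion were automatic, Theorem~\ref{theorem_sufficient} would not need it as a hypothesis. In your scheme it is only recoverable at the very end, as a consequence of the product structure, so it must not be used earlier.

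Second, and more seriously, the surjectivity of $A$ and $B$ --- equivalently $F(\ext B_Z)=\ext B_Z$ --- is a real theorem, and a ``connectedness argument on $S_X\times S_Y$'' does not deliver it: $A$ and $B$ are built from $F^{-1}$, which is not known to be continuous, so there is no continuity to feed into a connectedness argument on the spheres; and distance-$2$ constraints only ever pin coordinates to unit vectors already of the form $\pm A(x)$, $\pm B(y)$, so they cannot show that an arbitrary $x'\in S_X$ lies in the range of $A$. This step is load-bearing: since distances can only grow under $G=F^{-1}$, the only usable upper bounds on image distances come from points whose $G$-images are already known, so to determine the second coordinate $q$ of $G(x_0,y)$ for $\|y\|<1$ you must produce the preimage under $B$ of the unknown direction $q/\|q\|$ --- i.e., you need $B^{-1}$ defined on all of $S_Y$. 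The paper obtains this by partitioning $B_Z$ into $Z_X=\{\|z_x\|>\|z_y\|\}$, $Z_Y=\{\|z_x\|<\|z_y\|\}$ and $E=\{\|z_x\|=\|z_y\|\}$, using continuity of $F$ (not of $F^{-1}$) and connectedness of $Z_X$ and $Z_Y$ to force $F(E)=E$, and then deducing $F(\ext B_Z)=\ext B_Z$ from item~(4) of Proposition~\ref{proposition_BnEproperties}; note that this requires a separate treatment when one summand is $\R$, since $Z_X$ or $Z_Y$ is then disconnected. You need to import this mechanism (the correct separating set is $E$, not $S_X\times S_Y$) or find a genuine substitute before your extension step can go through. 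The concluding appeal to plasticity of strictly convex balls and Theorem~\ref{theorem_Mankiewicz} is a legitimate alternative to the paper's use of Theorem~\ref{theorem_sufficient} once the product structure is in hand.
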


\begin{proof}
Denote the space $X\oplus_\infty Y$ by $Z$. The unit ball of this space is the set of all pairs $(x,y)$ where $x\in B_X$ and $y\in B_Y$. Extreme points are the pairs $(x,y)$ where $x\in S_X$ and $y\in S_Y$. If one of the two spaces is trivial, then $Z$ is itself strictly convex, so we can limit ourselves with the case where $X$ and $Y$ are both non-trivial.

Let $F\colon B_Z\to B_Z$ be an arbitrary non-expansive bijection from the unit ball of $Z$ onto itself. Our goal is to show that $F$ is an isometry. Let us fix the notation. Denote by $G$ the inverse of $F$. For $z\in Z$ denote by $z_x$ and $z_y$ the first and the second element of $z$. Denote by $Z_X$ the set
\[\{z\in B_Z\colon \|z_x\|>\|z_y\|\},\]
denote by $Z_Y$ the set
\[\{z\in B_Z\colon \|z_x\|<\|z_y\|\},\]
and denote by $E$ the set
\[\{z\in B_Z\colon \|z_x\|=\|z_y\|\}.\]
These three sets form a partition of the unit ball of $Z$ -- they are pairwise disjoint and their union is $B_Z$. The set $E$ is closed, the closure of $Z_X$ is $Z_X\cup E$ and the closure of $Z_Y$ is $Z_Y\cup E$. Since $E=\{\alpha z\colon \alpha\in[0,1],\ z\in\ext B_Z\}$, the last item of Proposition \ref{proposition_BnEproperties} implies $G(E)\subset E$. The proof is going to depend on the number of dimensions of $X$ and $Y$. If $X$ has more than one dimension, then $Z_X$ is a connected set. However, if $X$ is $\R$ (that is, $X$ is one-dimensional), then the set $Z_X$ has two connected components: $Z_X^-=\{z\in Z_X\colon z_x<0\}$ and $Z_X^+=\{z\in Z_X\colon z_x>0\}$. Obviously, the same can be said about $Y$ and $Z_Y$. If $X$ and $Y$ are both one-dimensional, then $Z$ is finite-dimensional, so we can omit this case. Hence, we have to handle two cases: 1) both spaces have more than one dimension, 2) one of the spaces is $\R$ and the other has more than one dimension. 

1) Let us start with the first case. We know that $G(E)\subset E$, so it follows that $F(Z_X\cup Z_Y)\subset Z_X\cup Z_Y$. The set $Z_X\cup Z_Y$ has two connected components, these are $Z_X$ and $Z_Y$, so the continuity of $F$ together with the last inclusion imply that there are four possible cases:
\begin{enumerate}
    \item[i)] $F(Z_X)\subset Z_X$ and $F(Z_Y)\subset Z_X$,
    \item[ii)] $F(Z_X)\subset Z_X$ and $F(Z_Y)\subset Z_Y$,
    \item[iii)] $F(Z_X)\subset Z_Y$ and $F(Z_Y)\subset Z_X$,
    \item[iv)] $F(Z_X)\subset Z_Y$ and $F(Z_Y)\subset Z_Y$.
\end{enumerate}
Consider case i). For this case, the continuity of $F$ implies that $F(\Cl{Z_X})\subset \Cl{Z_X}$ and $F(\Cl{Z_Y})\subset \Cl{Z_X}$, and since the union of $\Cl{Z_X}$ and $\Cl{Z_Y}$ is the unit ball, we have $F(B_Z)\subset \Cl{Z_X}$, which contradicts the surjectivity of $F$. Case iv) is similar. Therefore, we are left with cases ii) and iii), which we are going to refer to as cases A) and B).

2) Now, let us consider the case where one of the spaces is $\R$ and the other has more than one dimension. We are going to show that this reduces to case A). Suppose that $X$ has more than one dimension and $Y$ is $\R$. Let us introduce the following notations:
\begin{align*}
    Z_Y^-&=\{z\in Z_Y\colon z_y<0\},\\
    Z_Y^+&=\{z\in Z_Y\colon z_y>0\},\\
    E^-&=\{z\in E\colon z_y\leq 0\},\\
    E^+&=\{z\in E\colon z_y\geq 0\}.
\end{align*}
Note that the closure of $Z_Y^-$ is $Z_Y^-\cup E^-$ and the closure of $Z_Y^+$ is $Z_Y^+\cup E^+$. As with the previous case, the starting point is the inclusion $F(Z_X\cup Z_Y)\subset Z_X\cup Z_Y$, but the set $Z_X\cup Z_Y$ is now comprised of three connected components: $Z_Y^-$, $Z_Y^+$, and $Z_X$. The continuity of $F$ together with the last inclusion imply that there are $27=3^3$ possible cases, but some of these can be excluded, because they contradict the surjectivity of $F$. As a result, we are left with a total of $6=3!$ possible cases. We can divide these cases into three groups of two, obtaining the following three cases:
\begin{enumerate}
    \item[i)] $F(Z_X)\subset Z_X$ and $F(Z_Y)\subset Z_Y$,
    \item[ii)] $F(Z_X)\subset Z_Y^-$ and $F(Z_Y)\subset Z_X\cup Z_Y^+$,
    \item[iii)] $F(Z_X)\subset Z_Y^+$ and $F(Z_Y)\subset Z_X\cup Z_Y^-$.
\end{enumerate}
The first case is the same as case A). Our goal is to exclude cases ii) and iii). Consider case ii). The continuity of $F$ implies the inclusions $F(\Cl{Z_X})\subset Z_Y^-\cup E^-$ and $F(\Cl{Z_Y})\subset Z_X\cup Z_Y^+\cup E$. Since $E$ is a subset of $\Cl{Z_X}$ and a subset of $\Cl{Z_Y}$, we obtain the inclusion $F(E)\subset E^-$, which contradicts the previously obtained inclusion $G(E)\subset E$. Case iii) is similar. As we see, we are left with case A).

Thereby, we have two cases to consider: A) $F(Z_X)\subset Z_X$ and $F(Z_Y)\subset Z_Y$, B) $F(Z_X)\subset Z_Y$ and $F(Z_Y)\subset Z_X$. Let us make some additional conclusions. For case A), the continuity of $F$ implies that $F(\Cl{Z_X})\subset \Cl{Z_X}$ and $F(\Cl{Z_Y})\subset \Cl{Z_Y}$. As $E$ is a subset of $\Cl{Z_X}$ and a subset of $\Cl{Z_Y}$, it follows that $F(E)\subset \Cl{Z_X}\cap \Cl{Z_Y}$. The latter is equal to $E$, so we have the inclusion $F(E)\subset E$. We can use the same argument to show this for case B). Now, for case A) we have $F(Z_X)\subset Z_X$, $F(Z_Y)\subset Z_Y$ and $F(E)\subset E$. Since $F$ is a bijection, while $Z_X$, $Z_Y$, and $E$ form a partition of $B_Z$, we can conclude that $F(Z_X)=Z_X$, $F(Z_Y)=Z_Y$, and $F(E)=E$. Analogously, for case B) we obtain $F(Z_X)=Z_Y$, $F(Z_Y)=Z_X$, and $F(E)=E$.

The last item of Proposition \ref{proposition_BnEproperties} asserts that $G(\ext B_Z)\subset \ext B_Z$. We can also show that $F(\ext B_Z)\subset\ext B_Z$. To demonstrate this, let $z\in\ext B_Z$ be arbitrary. Since $F(E)=E$ and $z$ belongs to $E$, $F(z)$ belongs to $E$ as well, so $F(z)=\alpha z'$ where $z'\in\ext B_Z$ and $\alpha\in[0,1]$. We need to show that $\alpha=1$. The last item of Proposition \ref{proposition_BnEproperties} implies that $G(\alpha z')=\alpha G(z')$ and $G(z')\in\ext B_Z$, so we obtain the equation $z=\alpha G(z')$. Taking the norm of both sides, we see that $\alpha=1$. This proves the inclusion $F(\ext B_Z)\subset\ext B_Z$, hence $F(\ext B_Z)= \ext B_Z$.

To finish the proof, we would have to consider cases A) and B) separately. However, the proofs for these two cases are almost identical, so we will only consider case A).

First, we are going to show that
\[\forall z,w\in B_Z\quad (z_x=w_x\in S_X \implies G(z)_x=G(w)_x\in S_X),\]
\[\forall z,w\in B_Z\quad (z_y=w_y\in S_Y \implies G(z)_y=G(w)_y\in S_Y).\]
Let us demonstrate the first item. Fix $z,w\in B_Z$ and suppose $z_x=w_x\in S_X$. Let us consider four possible cases.
\begin{itemize}
\item First, we consider the case where $z_y\in S_Y$ and $w_y\not\in S_Y$. Since $w_x\in S_X$ and $w_y\not\in S_Y$, it follows that $w\in Z_X$. As $w\in Z_X$ and $F(Z_X)=Z_X$, we have $G(w)\in Z_X$. Therefore, $G(w)_y\not\in S_Y$. Since $z_x=w_x\in S_X$, the distance between $-z$ and $w$ is two. Therefore, the distance between $G(-z)$ and $G(w)$ is also two. This means that either $G(-z)_x$ and $G(w)_x$ are distance two apart or $G(-z)_y$ and $G(w)_y$ are distance two apart. The second possibility is excluded, because $G(w)_y\not\in S_Y$. Consequently, we have the first case. As $G(-z)_x$ and $G(w)_x$ are distance two apart, they should belong to $S_X$. Moreover, since $X$ is strictly convex, it follows that $G(-z)_x=-G(w)_x$. Finally, since $z\in\ext B_Z$, the last item of Proposition \ref{proposition_BnEproperties} implies that $G(-z)=-G(z)$, so we have $G(z)_x=G(w)_x\in S_X$ as wanted.
\item The same argument can be applied to the case where $z_y\not\in S_Y$ and $w_y\in S_Y$ by swapping the roles of $z$ and $w$.
\item If $z_y\in S_Y$ and $w_y\in S_Y$, then we take $p\in B_Z$ such that $p_y\not\in S_Y$ and $p_x$ is the same as $z_x$ and $w_x$. Then $z_x=p_x\in S_X$, $z_y\in S_Y$, and $p_y\not\in S_Y$, so $G(z)_x=G(p)_x\in S_X$ by the first item of this list. Similarly, as $w_x=p_x\in S_X$, $w_y\in S_Y$, and $p_y\not\in S_Y$, we have $G(w)_x=G(p)_x\in S_X$. Combining these two yields $G(z)_x=G(w)_x\in S_X$ as wanted.
\item An argument similar to the one used in the third item of this list can be applied to the case where $z_y\not\in S_Y$ and $w_y\not\in S_Y$ by choosing $p\in B_Z$ such that $p_y\in S_Y$ and $p_x$ is the same as $z_x$ and $w_x$.
\end{itemize}
We have considered all four possible cases. One can use the same argument to prove the second item. The obtained result brings us to consider functions $g_x\colon S_X\rightarrow S_X$ and $g_y:S_Y\rightarrow S_Y$ that can be defined with $g_x(x)=G(x,0)_x$ and $g_y(y)=G(0,y)_y$. So now we have
\[\forall z\in B_Z\quad (z_x\in S_X \implies G(z)_x=g_x(z_x)),\]
\[\forall z\in B_Z\quad (z_y\in S_Y \implies G(z)_y=g_y(z_y)).\]

We go on to prove some facts about these functions. First, let us show that these functions are injective. Consider $g_x$. Suppose by contrary that there are $x,x'\in S_X$ such that $x\ne x'$ and $g_x(x)=g_x(x')$. Choose arbitrary $y\in S_Y$ and consider pairs $(x,y)$ and $(x',y)$. We have $G(x,y)=(g_x(x),g_y(y))$ and $G(x',y)=(g_x(x'),g_y(y))$. Note that $(x,y)\ne(x',y)$, but $G(x,y)=G(x',y)$. This contradicts the injectivity of $G$. Thus, $g_x$ is actually injective. We can use the same argument to show that $g_y$ is injective too. 

Let us consider surjectivity. Let $x\in S_X$ and $y\in S_Y$ be arbitrary. Then $(x,y)$ is extreme and hence $F(x,y)$ is also extreme, because $F(\ext B_Z)=\ext B_Z$. Denote by $x'$ and $y'$ the first and the second element of $F(x,y)$. Since $(x',y')$ is extreme, we have $x'\in S_X$ and $y'\in S_Y$. Since $F$ sends $(x,y)$ to $(x',y')$, we have $G(x',y') = (x,y)$, thus $g_x(x')=x$ and $g_y(y')=y$, which proves the surjectivity of both $g_x$ and $g_y$. Now that we know $g_x$ and $g_y$ are bijective, denote the inverse of $g_x$ by $f_x$ and the inverse of $g_y$ by $f_y$.

Let us show that $g_x$ and $g_y$ are symmetric. That is, we demonstrate that $g_x(-x)=-g_x(x)$ and $g_y(-y)=-g_y(y)$ for every $x\in S_X$ and $y\in S_Y$. Pick an arbitrary $x\in S_X$. Let us show that $g_x(-x)=-g_x(x)$. Choose an arbitrary $y\in S_Y$. Consider pairs $z=(x,y)$ and $w=(-x,y)$. The distance between them is two, so the distance between $G(z)$ and $G(w)$ should be also two. We know that $G(z)=(g_x(x),g_y(y))$ and $G(w)=(g_x(-x),g_y(y))$. For distance between these two pairs to be equal to two, elements $g_x(x)$ and $g_x(-x)$ should be distance two apart. Since $X$ is a strictly convex space, it follows that $g_x(-x)=-g_x(x)$. The same argument can be used to show that $g_y(-y)=-g_y(y)$ for $y\in S_Y$. Obviously, the symmetricity of $g_x$ and $g_y$ implies that $f_x$ and $f_y$ are also symmetric.

Define a function $G_x:X\rightarrow X$ with $G_x(0)=0$ and $G_x(tx)=tg_x(x)$ for all $x\in S_X$ and $t>0$. Similarly, define a function $G_y:Y\rightarrow Y$ with $G_y(0)=0$ and $G_y(ty)=tg_y(y)$ for all $y\in S_Y$ and $t>0$. We see that $G_x$ coincides with $g_x$ on $S_X$ and $G_y$ coincides with $g_y$ on $S_Y$. Using the properties of $g_x$ and $g_y$ established above, one can show that $G_x$ and $G_y$ are bijective and homogeneous (that is, for every $x\in B_X$, $y\in B_Y$ and $t\in\mathbb{R}$ we have $G_x(tx)=tG_x(x)$ and $G_y(ty)=tG_y(y)$). Denote the inverses of $G_x$ and $G_y$ by $F_x$ and $F_y$. It is easy to see that $F_x$ and $F_y$ are also homogeneous. Moreover, we see that $F_x$ coincides with $f_x$ on $S_X$ and $F_y$ coincides with $f_y$ on $S_Y$.

Now, we are going to examine what $G$ does with a pair one element of which lies on the sphere and the other element of which is arbitrary. Our goal is to show that
\[\forall x\in S_X\ \forall y\in B_Y\quad G(x,y)=(G_x(x),G_y(y)),\]
\[\forall x\in B_X\ \forall y\in S_Y\quad G(x,y)=(G_x(x),G_y(y)).\]
Let us consider the first item. Fix arbitrary $x\in S_X$ and arbitrary $y\in B_Y$. First, let us consider the case $y=0$. Our goal is to show that $G(x,0)=(g_x(x),0)$. We know that $G(x,0)=(g_x(x),y_0)$ where $y_0\in B_Y$. Suppose by contrary that $y_0\ne 0$. Denote by $y_0^*$ the element $y_0/\|y_0\|\in S_Y$. We see that $G$ sends $(x,0)$ to $(g_x(x),y_0)$ and $(x,f_y(y_0^*))$ to $(g_x(x),y_0^*)$. The distance between $(x,0)$ and $(x,f_y(y_0^*))$ is one, while the distance between $(g_x(x),y_0)$ and $(g_x(x),y_0^*)$ is smaller than one, which is a contradiction. Next, consider the case $y\ne 0$. Then $y=tu$ where $u\in S_Y$ and $t\in(0,1]$. Our goal is to show that $G$ sends $(x,tu)$ to $(g_x(x),tg_y(u))$. Again, we know that $G(x,tu)=(g_x(x),y_0)$ where $y_0\in B_Y$. We need to show that $y_0=tg_y(u)$. If $y_0=0$, then $G$ sends both $(x,0)$ and $(x,tu)$ to $(g_x(x),0)$, which contradicts the injectivity of $G$. Therefore, $y_0\ne 0$ and we can consider the element $y_0^*=y_0/\|y_0\|\in S_Y$. We see that $G$ sends $(x,f_y(y_0^*))$ to $(g_x(x),y_0^*)$, $(x,tu)$ to $(g_x(x),y_0)$, and $(x,0)$ to $(g_x(x),0)$. The distance between $(g_x(x),y_0^*)$ and $(g_x(x),y_0)$ is $1-\|y_0\|$, the distance between $(g_x(x),y_0)$ and $(g_x(x),0)$ is $\|y_0\|$. So the distance between $(x,f_y(y_0^*))$ and $(x,tu)$ should be at most $1-\|y_0\|$ and the distance between $(x,tu)$ and $(x,0)$ should be at most $\|y_0\|$. This means that the distance between $f_y(y_0^*)$ and $tu$ should also be at most $1-\|y_0\|$ and the distance between $tu$ and $0$ should also be at most $\|y_0\|$. Since $Y$ is a strictly convex space, we have $tu=\|y_0\|f_y(y_0^*)$. It follows that $t=\|y_0\|$ and $u=f_y(y_0^*)$. Thus we have $y_0=\|y_0\|y_0^*=tg_y(u)$ as wanted. The proof of the second item is analogous.

From the last obtained result it follows directly that $F$ has the analogous properties. That is, we have
\[\forall x\in S_X\ \forall y\in B_Y\quad F(x,y)=(F_x(x),F_y(y)),\]
\[\forall x\in B_X\ \forall y\in S_Y\quad F(x,y)=(F_x(x),F_y(y)).\]

Finally, we show that for every $z\in B_Z$ we have $F(z)=(F_x(z_x),F_y(z_y))$. Let us first show that for every $z\in B_Z$ we have $F(z)_x=F_x(z_x)$. Fix arbitrary $z\in B_Z$. Denote the elements of $z$ by $x$ and $y$ and the elements of $F(z)$ by $x'$ and $y'$. The element $x\in B_X$ can be represented in the form $tu$ where $t\in[0,1]$ and $u\in S_X$. As $F_x(tu)=tF_x(u)$ by homogeneity of $F_x$, the equality that we need to prove takes the form $x'=tF_x(u)$. We know that $F$ sends $(tu,y)$ to $(x',y')$, $(u,y)$ to $(F_x(u),F_y(y))$ and $(-u,y)$ to $(F_x(-u),F_y(y))$. By homogeneity of $F_x$ we have $F_x(-u)=-F_x(u)$. The distance between $(u,y)$ and $(tu,y)$ is $1-t$, the distance between $(tu,y)$ and $(-u,y)$ is $1+t$. Thus the distance between $(F_x(u),F_y(y))$ and $(x',y')$ is at most $1-t$ and the distance between $(x',y')$ and $(-F_x(u),F_y(y))$ is at most $1+t$. This implies that the distance between $F_x(u)$ and $x'$ is also at most $1-t$ and the distance between $x'$ and $-F_x(u)$ is also at most $1+t$. Since $X$ is a strictly convex space, it follows that $x'=tF_x(u)$ as wanted. One can use the same argument to show that for every $z\in B_Z$ we have $F(z)_y=F_y(z_y)$.

We can finish the proof by applying Theorem \ref{theorem_sufficient}.

\end{proof}
\newpage
\section*{Acknowledgements}
 The third author is grateful to her scientific advisor Vladimir Kadets for help with this project. The third author was supported  by the  National Research Foundation of Ukraine funded by Ukrainian State budget in frames of project 2020.02/0096 ``Operators in infinite-dimensional spaces:  the interplay between geometry, algebra and topology''.
\printbibliography[title={References}]

@article{NPW2006,
    author =    {S.~A.~Naimpally and Z.~Piotrowski and E.~J.~Wingler},
    title =     {Plasticity in metric spaces},
    journal =   {J. Math. Anal. Appl},
    volume =    {313},
    pages =     {38-48},
    year =      {2006}
}

@article{CKOW2016,
    author =    {B.~Cascales and V.~Kadets and J.~Orihuela and E.J.~Wingler},
    title =     {Plasticity of the unit ball of a strictly convex Banach space},
    journal =   {Revista de la Real Academia de Ciencias Exactas, F\'{i}sicas y Naturales. Serie A. Matem\'{a}ticas},
    volume =    {110},
    number =    {2},
    pages =     {723-727},
    year =      {2016}
}

@article{AKZ2018,
    author =    {C.~Angosto and V.~Kadets and O.~Zavarzina},
    title =     {Non-expansive bijections, uniformities and polyhedral faces},
    journal =   {Journal of Mathematical Analysis and Applications},
    volume =    {471},
    number =    {1},
    pages =     {38-52},
    year =      {2019}
}

@article{KZ2018,
    author =    {V. Kadets and O. Zavarzina},
    title =     {Nonexpansive bijections to the unit ball of the $\ell_1$-sum of strictly convex Banach spaces},
    journal =   {Bulletin of the Australian Mathematical Society},
    volume =    {97},
    pages =     {285-292},
    year =      {2018}
}

@article{KZ2016,
    author =    {V. Kadets and O. Zavarzina},
    title =     {Plasticity of the unit ball of $\ell_1$},
    journal =   {Visnyk of V. N. Karazin Kharkiv National University. Ser. Mathematics, Applied Mathematics and Mechanics},
    volume =    {83},
    pages =     {4-9},
    year =      {2016}
}

@article{FH1936,
    author =    {H.~Freudenthal and W.~Hurewicz},
    title =     {Dehnungen, Verk\"urzungen, Isometrien},
    journal =   {Fund. Math},
    volume =    {26},
    pages =     {120-122},
    year =      {1936}
}

@article{Mankiewicz1972,
    author =    {P.~Mankiewicz},
    title =     {On extension of isometries in normed linear spaces},
    journal =   {Bull. Acad. Polon. Sci., S\'er. Sci. Math. Astronom. Phys.},
    volume =    {20},
    pages =     {367-371},
    year =      {1972}
}

@article{Leo2021,
    author =    {N.~Leo},
    title =     {Plasticity of the unit ball of $c$ and $c_0$},
    journal =   {Journal of Mathematical Analysis and Applications},
    volume =    {507},
    number =    {1},
    year =      {2022}
}

\end{document}